\newtheorem{theorem}{Theorem}[section]
\newtheorem{proposition}[theorem]{Proposition}
\newtheorem{lemma}[theorem]{Lemma}
\newtheorem{definition}[theorem]{Definition}
\DeclareMathOperator{\ric}{Ric}
\DeclareMathOperator{\hess}{Hess}
\DeclareMathOperator{\vol}{Vol}
\DeclareMathOperator{\tr}{tr}
\DeclareMathOperator{\tf}{tf}
\DeclareMathOperator{\SO}{SO}
\DeclareMathOperator{\divergence}{div}
\begin{document}

\title{Static near horizon geometries and rigidity of quasi-Einstein manifolds}
\author{Eric Bahuaud}
\address[Eric Bahuaud]{Department of Mathematics, Seattle University, Seattle, WA 98122, USA}
\email{bahuaude@seattleu.edu}

\author{Sharmila Gunasekaran}
\address[Sharmila Gunasekaran]{The Fields Institute for Research in Mathematical Sciences, 222 College St, Toronto ON, Canada M5T 3J1}
\email{gunasek1@ualberta.ca}

\author{Hari K Kunduri}
\address[Hari K Kunduri]{Department of Mathematics and Statistics and Department of Physics and Astronomy, McMaster University, Hamilton ON, Canada L8S 4K1}
\email{kundurih@mcmaster.ca}

\author{Eric Woolgar}
\address[Eric Woolgar]{Department of Mathematical and Statistical Sciences and Theoretical Physics Institute, University of Alberta, Edmonton, Alberta, Canada T6G 2G1}
\email{ewoolgar@ualberta.ca}

\begin{abstract}
Static vacuum near horizon geometries are solutions $(M,g,X)$ of a certain quasi-Einstein equation on a closed manifold $M$, where $g$ is a Riemannian metric and $X$ is a closed 1-form. It is known that when the cosmological constant vanishes, there is rigidity: $X$ vanishes and consequently $g$ is Ricci flat. We study this form of rigidity for all signs of the cosmological constant. It has been asserted that this rigidity also holds when the cosmological constant is negative, but we exhibit a counter-example. We show that for negative cosmological constant if $X$ does not vanish identically, it must be incompressible, have constant norm, and be nontrivial in cohomology, and $(M,g)$ must have constant scalar curvature and zero Euler characteristic. If the cosmological constant is positive, $X$ must be exact (and vanishing if $\dim M=2$). Our results apply more generally to a broad class of quasi-Einstein equations on closed manifolds. We extend some known results for quasi-Einstein metrics with exact 1-form $X$ to the closed $X$ case. We consider near horizon geometries for which the vacuum condition is relaxed somewhat to allow for the presence of a limited class of matter fields. An appendix contains a generalization of a result of Lucietti on the Yamabe type of quasi-Einstein compact metrics (with arbitrary $X$).
\end{abstract}

\maketitle

\section{Introduction}
\setcounter{equation}{0}

\noindent A spacetime is said to possess a black hole if it has two disjoint open regions, one of which can communicate with infinity by causal signals and one of which cannot, connected by a boundary. The boundary is the event horizon. For static and, more generally, stationary black holes, the event horizon is also a Killing horizon, meaning that there is a linear combination of Killing vector fields which is null at the horizon and timelike outside it. If the Killing horizon has vanishing surface gravity (zero temperature), it is called degenerate and the black hole is called extreme.

The near horizon geometry equations describe the geometry of degenerate Killing horizons in spacetime $(\mathbf{M}, \mathbf{g})$. Through a limiting procedure at the Killing horizon, one can construct a spacetime metric of the form
\begin{equation}
\label{eq1.1}
\mathbf{g} = 2dv\left ( dr +rX_i(x) dx^i +\frac12 r^2 Y(x)dv\right ) +g_{ij}(x) dx^idx^j
\end{equation} that obeys the Einstein equations. The coordinates $x^i$ parametrize a closed Riemannian submanifold $(M,g)$ of dimension $n$ (then the spacetime has dimension $n+2$). 
If the original spacetime is static, so that $\partial_v$ is timelike outside the horizon and hypersurface-orthogonal, then the 1-form $X$ is closed, so $dX=0$, and furthermore $dY=YX$. For a review, see \cite{KL}. 

Rigidity results for near horizon geometries inform uniqueness results for extreme black holes of corresponding spacetimes. 

The static vacuum near horizon geometry problem reduces to the problem of finding a metric of constant 2-Bakry-\'Emery Ricci curvature with closed 1-form $X$. We define the $m$-Bakry-\'Emery Ricci curvature as follows.
\begin{definition}\label{definition1.1}
Let $(M,g)$ be a Riemannian manifold. For $m\in (0,\infty)$ and a 1-form $X$, the \emph{$m$-Bakry-\'Emery Ricci curvature} $\ric_X^m$ is defined by
\begin{equation}
\label{eq1.2}
\ric_X^m:= \ric +\frac12 \pounds_X g -\frac{1}{m}X\otimes X.
\end{equation}
The \emph{quasi-Einstein equation} is the equation
\begin{equation}
\label{eq1.3}
\ric_X^m=\lambda g,\ \lambda\in{\mathbb R}.
\end{equation}
A \emph{quasi-Einstein manifold} is the triplet $(M,g,X)$ satisfying \eqref{eq1.3}. If $m=2$, the quasi-Einstein manifold is called a \emph{vacuum near horizon geometry} and if in addition, $dX =0$, then it is a called a \emph{static vacuum near horizon geometry.}
\end{definition}
In a slight abuse of notation, we use the same symbol for the 1-form $X$ and its metric-dual vector field (we also do not use distinct symbols for the spacetime $X$ of \eqref{eq1.1} and its pullback to $(M,g)$ in \eqref{eq1.2}). There is an extended definition of $m$-Bakry-\'Emery Ricci curvature for $m\in {\mathbb R}\cup\{ \pm\infty\}$, but the extension is not useful for our present purposes. We use the term \emph{quasi-Einstein equation} in a somewhat restricted sense, in that we require $m$ and $\lambda$ to be constants, with $m \in (0,\infty)$ and $\lambda \in \mathbb{R}$ (other authors permit these quantities to be functions, e.g., \cite{FFT}, while some authors require $m$ and $\lambda$ to be constant and $X$ to be exact, e.g., \cite{CSW}).

Many results that are true for near horizon geometries are also true for $m$-quasi-Einstein manifolds with $m$ taking values on an interval containing $m=2$. Our goal is to find solutions $(M,g,X)$ of \eqref{eq1.3} for  a closed manifold $M$ and a closed 1-form $X$ and in particular to determine the conditions under which such solutions must be Einstein. 
We extend some results of \cite{CSW} in the case of $X$ an exact (sometimes called gradient) 1-form and of \cite{CRT} when $m=2$ (see also \cite[Theorem 1.1]{Case}). In \cite{CRT}, it is shown that when $\lambda=0$ then $X$ vanishes and $(M,g)$ is Einstein. It was also claimed in \cite{CRT} that the same result holds for $\lambda<0$, though an explicit proof was not given. The claim became Theorem 4.1 of \cite{KL}.\footnote
{The reader should note that \cite{KL} appears in a journal that permits articles to be updated after publication. We refer here to Theorem 4.1 of \cite{KL} as it exists at the time that we are writing.}
However, a counter-example to this claim exists. Lim \cite{Lim} searched for solutions of the quasi-Einstein equation on homogeneous 3-manifolds. One of the solutions that she found yields a non-Einstein static vacuum near horizon geometry with $\lambda<0$. Let $M={\mathbb S}^1\times \Sigma$, where $\Sigma$ is a compact hyperbolic 2-manifold with metric $g_{\Sigma}$ of sectional curvature $-m$. Endow $M$ with the metric
\begin{equation}
\label{eq1.4}
g=d\Phi^2\oplus g_{\Sigma} =\frac{1}{m^2}(X\otimes X) \oplus g_{\Sigma},
\end{equation}
where $X=m d\Phi$ for $\Phi$ a coordinate on the ${\mathbb S}^1$ factor, so $X$ is closed but not exact since $\Phi$ is multi-valued on any open patch covering ${\mathbb S}^1$. It is easy to see that $(M,g,X)$ solves the quasi-Einstein equation with $\lambda=-m$. Setting $m=2$ yields a counter-example to the aforementioned claim. The following 5-dimensional spacetime possesses a degenerate Killing horizon whose near horizon geometry is $(M,g,X)$:
\begin{equation}
\label{eq1.5}
ds^2_5 = g_{\rm XBTZ}\oplus g_{\Sigma} = \frac{ 2dv dr}{\sqrt{r+a}} + 8r dv d\Phi +  4(r+a) d\Phi^2 + g_{\Sigma},
\end{equation}
where $a>0$ is constant. Here $g_{\rm XBTZ}$ is an extreme BTZ black hole metric (recall that any BTZ metric is locally isometric to ${\rm AdS}_3$, so the Ricci tensor of $g_{\rm XBTZ}$ is $\ric_{g_{\rm XBTZ}}=-2g_{\rm XBTZ}$; therefore, \cite[Theorem 4.1]{KL} notwithstanding, \eqref{eq1.5} is consistent with \cite[Corollary 3.1]{KL}.)
The 5-dimensional near horizon limit of \eqref{eq1.5} is obtained by making the replacements $r\mapsto \epsilon r$, $v\mapsto v/\epsilon$, and taking $\epsilon\to 0$ to yield a spacetime metric of the form \eqref{eq1.1}:
\begin{equation}
\label{eq1.6}
\mathbf{g} = \frac{ 2dv dr}{\sqrt{a}} + 8r dv d\Phi + 4ad\Phi^2  + g_{\Sigma},
\end{equation}
from which we can read off \eqref{eq1.4} by choosing $a=\frac14$.

Before stating our main result, we recall here the definitions of Yamabe constant and Yamabe invariant. For $n=2$, we define the Yamabe constant of $M$ to be $4\pi$ times the Euler characteristic. For $n\ge 3$, the Yamabe functional of a closed manifold $(M^n,g)$, evaluated on some function $\phi\in C^{\infty}(M)\backslash \{ 0 \}$, is
\begin{equation}
\label{eq1.7}
Q_g(\phi):=\frac{\int_M \left [ \frac{4(n-1)}{(n-2)}|\nabla \phi |^2 +R_g \phi^2\right ] dV_g}{\left ( \int_M \phi^{\frac{2n}{(n-2)}}dV_g\right )^\frac{n-2}{n}},
\end{equation}
where $R_g$ is the scalar curvature of $g$. The \emph{Yamabe constant} associated to a conformal class $[g]$ of $g$ is $Y(M,[g]):=\inf_{\phi\in C^{\infty}(M)\backslash \{ 0 \}}$. Finally, the \emph{Yamabe invariant} (sometimes called the sigma-constant) of $M$ is the supremum of $Y(M,[g])$ taken over conformal classes $[g]$. Clearly the only closed orientable 2-manifold of positive Yamabe type (i.e., with positive Yamabe constant) is the 2-sphere ${\mathbb S}^2$. Since a 3-manifold is of positive Yamabe type if and only if it admits a metric of positive scalar curvature, then \cite[Theorem E]{GL} or the results of \cite{SY} imply that the only closed orientable 3-manifolds of positive Yamabe type are spherical spaces, ${\mathbb S}^1\times {\mathbb S}^2$, and connected sums thereof.

We now state our main theorem, which recovers the $\lambda=0$ rigidity result of \cite{CRT}, gives an extension to nonzero $\lambda$ which accounts for the above example, and extends some of the results of \cite{CSW} to cases where $X$ is closed but not assumed to be exact. As our results hold for any $m \in (0,\infty)$, we have stated and proved them for quasi-Einstein manifolds from which the static near horizon geometry case can be obtained by applying $m=2$. Results in \cite{CSW} yield a proof of the theorem for exact $X$.

\begin{theorem}\label{theorem1.2}
Let $(M,g,X)$ be a closed manifold such that $(g,X)$ solves \eqref{eq1.3} for some $m\in (0,\infty)$ and some 1-form $X$ such that $dX=0$. 
\begin{itemize}
\item[(i)] If $\lambda>0$ then $X$ is exact and $M$ is of positive Yamabe type.
If the scalar curvature obeys $R=const$ then $X=0$ and $(M,g)$ is Einstein. 
\item[(ii)] If $\lambda=0$ then $X=0$ and $(M,g)$ is Ricci-flat.
\item[(iii)] If $\lambda<0$, then the scalar curvature is constant and either 
\begin{itemize}
\item [a)] $X$ is not exact, $n:=\dim M \ge 3$, and $(M,g,X)$ obeys
\begin{equation}
\label{eq1.8}
\divergence_g X = 0,\ |X|_g^2 = -m\lambda,\ R=\left ( n-1\right )\lambda \text{ and } \chi(M)=0
\end{equation}
 or $(M,g)$ is negative Einstein.
\item [b)] If $b_1(M)=0$ and $\lambda<0$ then $(M,g)$ is negative Einstein.
\end{itemize}
\end{itemize}
\end{theorem}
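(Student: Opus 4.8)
The plan is to isolate a single first integral of \eqref{eq1.3} that governs all three cases simultaneously. Set $u:=|X|_g^2$. Tracing \eqref{eq1.3} gives $R+\divergence_g X-\tfrac1m u=n\lambda$, and taking the divergence of \eqref{eq1.3} while using $dX=0$ and the contracted second Bianchi identity yields two independent expressions for $\ric(X,\cdot\,)$; subtracting them produces a formula for $\nabla R$ in terms of $X$ and $\nabla u$ alone. The crucial point is that the combination
\[
G:=R+\tfrac{m-1}{m}\,u
\]
then satisfies $\nabla G=\Psi\,X$, where $\Psi=\tfrac2m G+\tfrac{2(m-n)}{m}\lambda$ is affine in $G$; consequently $\Psi$ itself obeys $d\Psi=\tfrac2m\,\Psi\,X$. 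Passing to the universal cover, where $X=df$ for a genuine function $f$, the quantity $\Psi e^{-2f/m}$ is locally constant, hence globally constant by connectedness. Since $\Psi$ descends to $M$ while $e^{2f/m}$ acquires nontrivial multiplicative monodromy as soon as $X$ has a nonzero period, I expect the clean dichotomy: \emph{either $X$ is exact, or $\Psi\equiv0$}, the latter meaning $G\equiv(n-m)\lambda$, so that $R+\tfrac{m-1}{m}u$ is constant.

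In the second branch the trace identity then gives $\divergence_g X=u+m\lambda$, and integrating over the closed manifold yields $\int_M u\,dV_g=-m\lambda\,\vol(M)$. This one identity resolves the sign questions. For $\lambda>0$ the right-hand side is negative while the left is nonnegative, a contradiction, so $X$ must instead be exact, giving (i); for $\lambda=0$ it forces $u\equiv0$, hence $X=0$, again exact, reducing (ii) to the exact case; and for $\lambda<0$ it identifies the average of $u$ as $-m\lambda$. Whenever $X$ is exact I would invoke \cite{CSW}: for $\lambda\le0$ this yields $X=0$ and an Einstein (indeed negative Einstein) metric, and for $\lambda>0$ with $R$ constant I would substitute $w=e^{-f/m}$ into $\Delta w=\tfrac wm(R-n\lambda)$ and integrate to force $R=n\lambda$, $w$ constant, and $X=0$. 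The positive Yamabe type asserted in (i) I would take from the appendix's extension of Lucietti's theorem (for $m=2$ and $n\ge3$ it also follows directly, as the positive solution $w$ of $-\Delta w+\tfrac1m Rw=\tfrac{n\lambda}m w$ certifies positivity of the conformal Laplacian).

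It remains to run the non-exact branch with $\lambda<0$ to its rigid conclusion \eqref{eq1.8}. Here $R+\tfrac{m-1}m u$ is constant, so $\nabla R=-\tfrac{m-1}m\nabla u$; feeding this into the Bochner identity $\tfrac12\Delta u=|\nabla X|^2+\ric(X,X)+X(\divergence_g X)$ (valid since $dX=0$), the first integral is engineered so that every first-derivative term cancels, leaving
\[
\tfrac12\Delta_X u=|\nabla X|^2+\tfrac1m\,u\,(u+m\lambda),
\]
where $\Delta_X u:=\Delta u-\langle X,\nabla u\rangle$ is a bona fide elliptic operator on the closed manifold $M$ requiring no global potential. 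Evaluating at a maximum of $u$, where $\Delta_X u\le0$, gives $u(u+m\lambda)\le0$; since the average of $u$ is $-m\lambda$ we have $\max u\ge-m\lambda>0$, forcing $\max u=-m\lambda$, and as $u$ is everywhere at most its average it must equal $-m\lambda$ identically. Then $\divergence_g X=0$, $R=(n-1)\lambda$, and $X$ is a nowhere-vanishing harmonic $1$-form, so $\chi(M)=0$ by Poincar\'e--Hopf; a short contraction of \eqref{eq1.3} with $X$ excludes $n=2$. Part (iii)(b) is then immediate: $b_1(M)=0$ annihilates the harmonic part of $X$ by Hodge theory, so $X$ is exact and \cite{CSW} returns negative Einstein.

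The step I expect to be hardest is the first one: recognizing the conserved combination $G=R+\tfrac{m-1}{m}u$ and realizing that it is its \emph{monodromy}, not any weighted integration by parts (which is unavailable precisely because $X$ need not be exact), that forces the exact-versus-rigid alternative. Every later step---the constancy of $R$, the divergence identity controlling the sign of $\lambda$, and above all the cancellation that renders the $\Delta_X$-Bochner formula sign-definite---depends on having this first integral already in hand.
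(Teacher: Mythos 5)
Your core argument is essentially the paper's own, repackaged. Your first integral $\nabla G=\Psi X$ with $\Psi=-\tfrac{2}{m}\left(\divergence_g X-|X|_g^2-m\lambda\right)$ is exactly the general-$m$ form of \eqref{eq2.14}; the monodromy argument on the universal cover is equivalent to the paper's patching of local potentials $f_\alpha$ and characteristic constants $\mu_\alpha$ in Lemma \ref{lemma3.2}; the integration of \eqref{eq3.4} over $M$ to settle the sign of $\lambda$ is identical; and your drift-Laplacian identity $\tfrac12\Delta_X u=|\nabla X|_g^2+\tfrac1m u\left(u+m\lambda\right)$ together with the maximum principle and the average value of $u$ is precisely Lemma \ref{lemma3.3}. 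All of these steps check out.

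There are, however, two places where your plan does not cover the full parameter range $m\in(0,\infty)$. First, you derive the positive Yamabe type in (i) entirely from Proposition \ref{propositionA.1} (equivalently from the eigenvalue remark about $w=e^{-f/m}$), but that argument only works for $0<m\le\frac{4(n-1)}{(n-2)}$; for larger $m$ it proves nothing. The paper closes this using the exactness of $X$, which you have already established: for $m\ge1$, \cite[Proposition 3.6(a)]{CSW} gives the pointwise bound $R\ge\frac{n(n-1)}{(n+m-1)}\lambda>0$, which yields a positive Yamabe constant directly, and the appendix is invoked only for $0<m<1$. (You also omit $n=2$, where one integrates the trace of \eqref{eq1.3} to get $\int_M R\,dV_g=n\lambda\vol(M)+\tfrac1m\int_M|X|_g^2\,dV_g>0$ and concludes $\chi(M)>0$ by Gauss--Bonnet.) Second, in the exact branch with $\lambda\le0$ you outsource the rigidity entirely to \cite{CSW}; but the paper points out that the relevant $\lambda=0$ result there (\cite[Proposition 3.6(b)]{CSW}) requires $m>1$, and it supplies its own short argument for all $m>0$ (integrate \eqref{eq3.1} to force $\mu\ge0$, then rule out $\mu>0$ at a minimum of $f$), and likewise gives a self-contained max/min argument on $f$ for $\lambda<0$. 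Both gaps are easily repaired within your framework, but as written the proposal does not establish (i) for $m>\frac{4(n-1)}{(n-2)}$ nor, depending on what exactly the cited results cover, (ii) for $0<m\le1$.
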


The following remarks are in order.

\begin{enumerate}

\item Nontrivial quasi-Einstein metrics with $dX=0$ do not occur if $M$ is a closed 2-manifold. Theorem \ref{theorem1.2} shows that such metrics cannot arise unless $\lambda>0$ and $X$ is exact. But in \cite[Theorem 1.2]{CSW} it is shown that there are no nontrivial quasi-Einstein metrics on closed 2-manifolds when $X$ is exact.

\item In the case of Theorem \ref{theorem1.2}.(i), the assumption that $M$ is closed is redundant, since the closure of $M$ is guaranteed (even if $X$ is not a closed form) by a theorem of Limoncu \cite{Limoncu} whenever $\lambda>0$, which also implies that the fundamental group is finite. This in turn implies that some manifolds of positive Yamabe type do not admit quasi-Einstein metrics with $\lambda>0$; e.g., ${\mathbb S^2}\times {\mathbb S^1}$ \cite{KW}.

\item In every case where we obtain rigidity, we find that $X=0$. Consequently, our conclusions provide stronger rigidity results than the conditions required for rigidity in the moduli space of metrics on a compact manifold (which would follow whenever $\pounds_X g-\frac{1}{m}X\otimes X$ is proportional to $g$).

\item It follows from Theorem \ref{theorem1.2}.(i) and (ii) that all static locally homogeneous quasi-Einstein closed manifolds with $\lambda\ge 0$ are Einstein. But the example of \cite{Lim} described above shows that this result does not extend to $\lambda<0$. 

\item A family of complete, nonhomogeneous metrics on $S^2$-bundles over K\"ahler-Einstein spaces constructed in \cite{LPP} furnishes examples of static quasi-Einstein metrics with $\lambda >0$ and $m>1$ for which $X$ is exact but does not vanish. See also the nonhomogeneous Einstein metrics on closed warped product manifolds constructed in \cite{Boehm}.

\item Compare Theorem \ref{theorem1.2}.(i) to a result of Perelman \cite[Sections 2.3 and 2.4]{Perelman}, which shows that all expanding Ricci solitons ($\lambda>0$ and $m=\infty$ in \eqref{eq1.3}) are gradient.

\item The positivity of the Yamabe type for the $\lambda>0$ case has been proved by Lucietti even when $X$ is not closed, but only for $m=2$ \cite{Lucietti}. In the appendix, we will show that his proof actually works whenever $0<m\le\frac{4(n-1)}{(n-2)}$. We will need this fact in the proof of Theorem \ref{theorem1.2}.(i) when $m$ lies between $0$ and $1$.
\end{enumerate}

These results indicate that the condition $dX=0$ is a promising natural condition for rigidity (or at least further classification, in view of the metrics of \cite{LPP} and the metric \eqref{eq1.4}) of quasi-Einstein metrics on closed manifolds. Without this condition, there are many examples of near horizon geometries that are not Einstein, e.g., the near horizon geometries of the extreme Kerr and Myers-Perry black holes. 

In Section \ref{section2}, we review the reduction of static vacuum near horizon geometries to the study of a quasi-Einstein equation for $(M,g,X)$ with $m=2$ together with the condition that $dX=0$. In Section \ref{section3}, we then use this reduced system (but letting $m>0$ be arbitrary) to prove Theorem \ref{theorem1.2}. In Section \ref{section4} we turn our attention to static near horizon geometries in the presence of matter. We will describe the near horizon geometries for these equations and introduce restrictive but physically reasonable assumptions under which those equations reduce to the form of Equation \eqref{eq1.3}. This will allow us to apply Theorem \ref{theorem1.2} to static near horizon geometries with some forms of matter. The appendix contains a version of Lucietti's $m=2$ proof of the positivity of the Yamabe type for near horizon geometries (with general $X$) which holds for $m\le \frac{4(n-1)}{(n-2)}$. This is used to extend the proof of Theorem \ref{theorem1.2}.(i) from $m\ge 1$ to $m>0$.

\subsection{Conventions} Throughout, $(M,g)$ is a connected $n$-manifold. We also take $M$ to be closed (compact without boundary). We will take $X$ to be a closed 1-form on $(M,g)$, except that we relax this condition in the Appendix. We take $\mathbf{g}$ to be a Lorentzian metric obeying the vacuum Einstein equations with cosmological constant whereas $g$ is a Riemannian quasi-Einstein metric; note that we relax the vacuum condition in Section \ref{section4}. Acting on functions, $\Delta_g$ denotes the trace of the Hessian $\hess_g$; on 1-forms, etc, it is the rough (or connection) Laplacian. Similarly, $\divergence_g \omega$ is the trace of the covariant derivative of a 1-form $\omega$. Quantities with raised indices are $g$-metric duals of the same quantities with lower indices, and conversely. We use the symbol $\otimes$ to denote symmetric tensor product. We always assume that $m \in (0,\infty)$ and $\lambda \in \mathbb{R}$ in equation \eqref{eq1.3}.

\subsection{Acknowledgements} The research of EB was partially supported by a Simons Foundation Grant (\#426628, E. Bahuaud). The research of HK was supported by NSERC grant RGPIN--04887--2018. The research of EW was supported by NSERC grant RGPIN--2022--03440. We are grateful to the referees for many suggestions which improved the presentation.

Data statement: Data sharing not applicable to this article as no datasets were generated or analysed during the current study.

Conflict of interest statement: The authors have no conflicts of interest.

\section{The static near horizon geometry equations}
\setcounter{equation}{0}
\label{section2}

\noindent The vacuum Einstein equations are
\begin{equation}
\label{eq2.1}
\ric(\mathbf{g})-\frac12 R \mathbf{g} + \Lambda \mathbf{g}=0.
\end{equation}
These impose conditions on the quantities $Y$, $X_i$, and $g_{ij}$ appearing in \eqref{eq1.1}. From \cite[equation (12) and Section 4]{KL}, these conditions are
\begin{eqnarray}
\label{eq2.2}
\lambda&=& Y-\frac12 |X|_g^2+\frac{1}{2} \divergence_g X,\\
\label{eq2.3}
\lambda g&=&\ric_X^2(g):=\ric(g) +\frac12 \pounds_X g -\frac{1}{2}X\otimes X,\\
\label{eq2.4}
0&=& \Delta_g Y-3 \nabla_X Y -Y\divergence_g X +2Y|X|_g^2-\frac12 |dX|_g^2,\\
\label{eq2.5}
0&=&\nabla_iY-YX_i-X^j\left ( \nabla_iX_j-\nabla_jX_i\right ) +\frac12 \nabla^j \left ( \nabla_iX_j-\nabla_jX_i\right ).
\end{eqnarray}
Here $\nabla$ is the connection of $g$, and $g$ is used to raise and lower indices. If the spacetime cosmological constant is $\Lambda$ then $\lambda=\frac{2\Lambda}{n}$ where $n= \dim M$ (so the spacetime dimension is $n+2$). By $|dX|_g^2$, we mean $\left ( \nabla_i X_j-\nabla_jX_i\right )\left ( \nabla^i X^j-\nabla^jX^i\right )$. To obtain the \emph{static} near horizon geometry equations, we augment this system with the condition
\begin{equation}
\label{eq2.6}
dX=0.
\end{equation}
We may use \eqref{eq2.6} to reduce the above system to
\begin{eqnarray}
\label{eq2.7}
\lambda &=& Y-\frac12 |X|_g^2+\frac12 \divergence_g X,\\
\label{eq2.8}
\lambda g&=&\ric_X^2(g):=\ric(g) +\frac12 \pounds_X g -\frac{1}{2}X\otimes X,\\
\label{eq2.9}
0&=& \Delta_g Y-3 \nabla_X Y -Y\divergence_g X +2Y|X|_g^2,\\
\label{eq2.10}
0&=&\nabla_iY-YX_i, \\
\label{eq2.11}
0&=& dX.
\end{eqnarray}

\begin{lemma}\label{lemma2.1}
Let $(M,g,X)$ be a solution of equations \eqref{eq2.8} and \eqref{eq2.11}. If \eqref{eq2.7} defines $Y=Y(g,X)$ in terms of this solution, then \eqref{eq2.9} and \eqref{eq2.10} are satisfied as well.
\end{lemma}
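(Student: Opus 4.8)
The plan is to observe first that equation \eqref{eq2.9} is an algebraic consequence of \eqref{eq2.10}, so that the entire content of the lemma reduces to establishing \eqref{eq2.10}, namely $\nabla Y = YX$. Indeed, granting $\nabla_i Y = YX_i$, one computes $\nabla_X Y = X^i\nabla_i Y = Y|X|_g^2$ and $\Delta_g Y = \nabla^i(YX_i) = Y|X|_g^2 + Y\divergence_g X$; feeding these into the right-hand side of \eqref{eq2.9} makes the four terms cancel in pairs. This step is routine once \eqref{eq2.10} is in hand, so I would dispatch it immediately and concentrate on \eqref{eq2.10}.

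To prove \eqref{eq2.10} I would exploit $dX=0$ from \eqref{eq2.11}, which says that the covariant derivative $\nabla_i X_j$ is symmetric; writing $H_{ij} := \nabla_i X_j = \nabla_j X_i$, we get $\divergence_g X = \tr_g H$ and $\tfrac12\nabla_i|X|_g^2 = X^j\nabla_i X_j = X^j\nabla_j X_i = \nabla_X X_i$. Differentiating the definition \eqref{eq2.7} of $Y$ then gives $\nabla_i Y = \nabla_X X_i - \tfrac12\nabla_i(\divergence_g X)$, so the task becomes producing a closed expression for $\nabla_i(\divergence_g X)$ in terms of $X$ alone.

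That expression comes from extracting three identities from \eqref{eq2.8}, which I rewrite (using the symmetry of $H$) as $R_{ij} + H_{ij} - \tfrac12 X_iX_j = \lambda g_{ij}$. Taking the $g$-trace gives the scalar curvature relation $R = n\lambda - \divergence_g X + \tfrac12|X|_g^2$; contracting with $X^j$ gives $R_{ij}X^j = \lambda X_i + \tfrac12|X|_g^2 X_i - \nabla_X X_i$; and taking the divergence—here invoking the contracted second Bianchi identity $\nabla^jR_{ij}=\tfrac12\nabla_iR$ together with the commutation identity $\nabla^j\nabla_iX_j = \nabla_i(\divergence_g X) + R_{ij}X^j$—gives a third relation among $\nabla_i(\divergence_g X)$, $\nabla_iR$, $R_{ij}X^j$, $\nabla_X X_i$, and $X_i\divergence_g X$. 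Using the first two identities to eliminate $\nabla_iR$ and $R_{ij}X^j$ from the third yields $\nabla_i(\divergence_g X) = 2\nabla_X X_i - 2\lambda X_i - |X|_g^2X_i + X_i\divergence_g X$, and substituting this into $\nabla_iY = \nabla_X X_i - \tfrac12\nabla_i(\divergence_g X)$ collapses everything to $\nabla_iY = (\lambda + \tfrac12|X|_g^2 - \tfrac12\divergence_g X)X_i = YX_i$, which is \eqref{eq2.10}.

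I expect the main obstacle to be purely a matter of bookkeeping rather than ideas: the divergence step requires commuting covariant derivatives through the Ricci identity, which introduces the curvature term $R_{ij}X^j$ with a sign that must be tracked consistently against the adopted curvature convention and against the contracted Bianchi identity (a convenient sanity check is the Bochner-type identity $\divergence(\hess f) = d(\Delta f) + \ric(\nabla f,\cdot)$, which is the local $X=df$ shadow of the relation I need). Equally important to flag at the outset is that the hypothesis $dX=0$ is used silently throughout—without the symmetry of $\nabla X$ neither the identification $\tfrac12\nabla_i|X|_g^2 = \nabla_XX_i$ nor the clean form of the divergence identity would hold—so every manipulation tacitly invokes \eqref{eq2.11}.
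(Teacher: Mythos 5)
Your proposal is correct and follows essentially the same route as the paper: dispatch \eqref{eq2.9} as a consequence of \eqref{eq2.10} (your direct substitution is the same computation as the paper's application of the operator $\nabla^i-2X^i$), then derive \eqref{eq2.10} by taking the divergence, trace, and $X$-contraction of \eqref{eq2.8}, using the contracted second Bianchi identity, the Ricci identity, and the symmetry of $\nabla X$ coming from $dX=0$. Your closed-form expression for $\nabla_i(\divergence_g X)$ is exactly the paper's equation \eqref{eq2.14} rearranged, so the two arguments coincide up to bookkeeping.
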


This result is of course a simplified version of a well-known result for aribtrary $X$; see e.g., \cite[Section 2]{KL}. See also \cite[Proposition 5]{KK}, from which one can obtain a version for closed $X$ with general $m$.

\begin{proof} Applying the operator $\nabla^i -2X^i$ to \eqref{eq2.10} we obtain \eqref{eq2.9}, so \eqref{eq2.9} is redundant and will automatically be satisfied if \eqref{eq2.10} is satisfied.

To see that \eqref{eq2.10} is satisfied when \eqref{eq2.8} is (if \eqref{eq2.7} defines $Y$), let $R$ be the scalar curvature of $g$. Using  equation \eqref{eq2.11} to write $(\frac{1}{2} \pounds_X g)_{ij} = \nabla_i X_j$ and then taking the divergence of \eqref{eq2.8}, applying the contracted second Bianchi identity, and using the Ricci identity, we obtain
\begin{equation}
\label{eq2.12}
\begin{split}
\frac12 \nabla_i R =&\, \frac12 X^j\nabla_j X_i +\frac12 X_i \divergence_g X -R_{ij}X^j -\nabla_i \left ( \divergence_g X\right )\\
=&\, \frac14 \nabla_i \left ( |X|_g^2 \right )  +\frac12 X_i \divergence_g X -R_{ij}X^j -\nabla_i \left ( \divergence_g X\right ),
\end{split}
\end{equation}
using \eqref{eq2.11} to write $\frac12 X^j\nabla_j X_i=\frac12 X^j\nabla_i X_j=\frac14 \nabla_i \left ( |X|_g^2 \right )$ in the last line. Now the trace of \eqref{eq2.8} yields
\begin{equation}
\label{eq2.13}
R=\frac12 |X|_g^2 -\divergence_g X +n\lambda,
\end{equation}
where $n=\dim M$. Using this to substitute for $R$ in \eqref{eq2.12} and also using \eqref{eq2.8} again to replace the $R_{ij}$ term on the right, we obtain (after some cancellations) that
\begin{equation}
\label{eq2.14}
\nabla_i \left ( \divergence_g X -|X|_g^2 -2\lambda \right ) = \left ( \divergence_g X -|X|_g^2 -2\lambda \right )X_i.
\end{equation}
Finally, using \eqref{eq2.7} to define $Y:=-\frac12 \divergence_g X +\frac12 |X|_g^2 +\lambda$, then \eqref{eq2.14} can be written as
\begin{equation}
\label{eq2.15}
\nabla_i Y = YX_i,
\end{equation}
which is \eqref{eq2.10}.
\end{proof}

\section{Static quasi-Einstein manifolds: Proof of Theorem \ref{theorem1.2}}
\setcounter{equation}{0}
\label{section3}

\noindent Lemma \ref{lemma2.1} reduces the study of the static vacuum near horizon geometry equations to the study of the system comprising \eqref{eq2.8} and \eqref{eq2.11}. Equation \eqref{eq2.8} is the $m=2$ case of the quasi-Einstein equation \eqref{eq1.3}. While our interest in Theorem \ref{theorem1.2} is the near horizon geometry case of $m=2$, the theorem is just as easily proved for arbitrary $m>0$, so in this section we replace \eqref{eq2.8} by \eqref{eq1.3} and consider the system comprising \eqref{eq1.3} and \eqref{eq2.11}.

We first need the following lemma which applies when $X$ is an exact form.

\begin{lemma}\label{lemma3.1}
Let the quasi-Einstein equation \eqref{eq1.3} hold on a closed manifold $M$ with $X=df$ and $m>0$. Then
\begin{equation}
\label{eq3.1}
\Delta_g f -|df|_g^2 -m\lambda =-m\mu e^{2f/m}
\end{equation}
for some arbitrary constant $\mu$ (sometimes called the \emph{characteristic constant}).
\end{lemma}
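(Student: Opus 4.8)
The plan is to rewrite the quasi-Einstein equation \eqref{eq1.3} in the exact case $X=df$ and extract from it a scalar equation by tracing, then recognize the left-hand side of \eqref{eq3.1} as a quantity whose gradient is controlled. First I would substitute $X=df$ into \eqref{eq1.2}, so that $\tfrac12\pounds_X g = \hess_g f$ and $X\otimes X = df\otimes df$, giving the tensor equation
\begin{equation*}
\ric + \hess_g f - \frac{1}{m}\, df\otimes df = \lambda g.
\end{equation*}
Taking the trace yields the scalar identity $R + \Delta_g f - \tfrac{1}{m}|df|_g^2 = n\lambda$, which relates the scalar curvature to $u:=\Delta_g f - |df|_g^2$. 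This is the natural candidate for the left-hand side of \eqref{eq3.1} up to the $|df|_g^2$ terms, so the strategy is to show that $u - m\lambda$ is a constant multiple of $e^{2f/m}$.

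The cleanest route is to compute $\nabla_i\big(\Delta_g f - |df|_g^2\big)$ and show it is proportional to $X_i=\nabla_i f$, with the proportionality factor $\tfrac{2}{m}$ times the quantity itself; this exactly mirrors the computation already carried out in Lemma \ref{lemma2.1}, where \eqref{eq2.14} establishes precisely such a gradient relation for the closed-form analogue. Concretely I would take the divergence of the tensor equation, apply the contracted second Bianchi identity $\nabla^i R = 2\nabla^i R_{ij}$ (here $\nabla^j R_{ij} = \tfrac12\nabla_i R$) together with the commutation identity $\nabla^j\hess_{ij}f = \nabla_i\Delta_g f + R_{ij}\nabla^j f$ (the Bochner/Ricci identity), and the product-rule expression $\nabla^j(\nabla_i f\,\nabla_j f) = \hess_{ij}f\,\nabla^j f + \nabla_i f\,\Delta_g f$. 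After substituting for $R_{ij}\nabla^j f$ using the original tensor equation contracted against $\nabla^j f$, the Ricci terms cancel and one is left with an identity of the form
\begin{equation*}
\nabla_i\big(\Delta_g f - |df|_g^2 - m\lambda\big) = \frac{2}{m}\,\big(\Delta_g f - |df|_g^2 - m\lambda\big)\,\nabla_i f.
\end{equation*}
Setting $w:=\Delta_g f - |df|_g^2 - m\lambda$, this reads $\nabla_i w = \tfrac{2}{m}\,w\,\nabla_i f$, i.e. $\nabla_i\big(w\,e^{-2f/m}\big)=0$, so $w\,e^{-2f/m}$ is constant on the connected manifold $M$. Writing that constant as $-m\mu$ gives $w = -m\mu\,e^{2f/m}$, which is exactly \eqref{eq3.1}.

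I expect the main obstacle to be purely computational bookkeeping rather than conceptual: getting the contraction identities and the factor of $\tfrac{1}{m}$ to align so that every Ricci term cancels and the clean proportionality $\nabla_i w = \tfrac{2}{m} w\,\nabla_i f$ emerges. In particular one must be careful that the $\tfrac{2}{m}$ coefficient (as opposed to the coefficient $1$ appearing in \eqref{eq2.14} for the $m=2$ closed case) is produced correctly by the $-\tfrac{1}{m}df\otimes df$ term; this is the step where an arithmetic slip would propagate. A useful check is that setting $m=2$ recovers the structure of \eqref{eq2.14}, and that the integrating factor $e^{-2f/m}$ is consistent with the standard relation $X=df=m\,d(\log u)$-type substitutions used elsewhere for gradient quasi-Einstein metrics. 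Once the gradient identity is in hand, the conclusion that $w\,e^{-2f/m}$ is constant is immediate from connectedness of $M$, and closedness of $M$ is not even needed for this particular lemma (it will matter for later integral arguments, but not here).
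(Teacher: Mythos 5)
Your proposal is correct and follows essentially the same route as the paper: take the divergence of the traced and untraced quasi-Einstein equation, use the contracted Bianchi and Ricci identities to cancel the Ricci terms, arrive at the gradient identity $\nabla_i w = \tfrac{2}{m}w\,\nabla_i f$ for $w=\Delta_g f - |df|_g^2 - m\lambda$, and integrate with the factor $e^{-2f/m}$ using connectedness. The paper's printed proof merely specializes its earlier identity \eqref{eq2.14} (derived for $m=2$) and asserts the general case, whereas you carry out the general-$m$ computation explicitly, which is the more complete presentation of the same argument.
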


\begin{proof}
If $X=df$ then \eqref{eq2.14} simplifies to
\begin{equation}
\label{eq3.2}
\nabla_i \left [ \Delta_g f -|df|_g^2 -2\lambda \right ] = \left [ \Delta_g f -|df|_g^2 -2\lambda \right ]\nabla_i f,
\end{equation}
so
\begin{equation}
\label{eq3.3}
\nabla_i \left \{ e^{-f} \left [ \Delta_g f -|df|_g^2 -2\lambda \right ] \right \}=0.
\end{equation}
Then \eqref{eq3.1} follows.
\end{proof}

The above lemma will be needed in the proof of the next lemma, from which Theorem \ref{theorem1.2} will follow.

\begin{lemma}\label{lemma3.2}
Let $(X,g)$ solve the quasi-Einstein equation \eqref{eq1.3} on a compact manifold $M$, where $X$ is a closed 1-form $dX=0$ and $m>0$.
\begin{itemize}
\item[(i)] If $\lambda> 0$ then $X$ is an exact form; i.e., $X=df$ for some $f:M\to{\mathbb R}$ and $f$ obeys \eqref{eq3.1}.
\item[(ii)] If $\lambda=0$ then $X$ vanishes and $(M,g)$ is Ricci-flat.
\item[(iii)] If $\lambda< 0$ then either $X$ is exact or $X$ obeys
\begin{equation}
\label{eq3.4}
\divergence_g X -|X|_g^2 = m\lambda .
\end{equation}
\end{itemize}
\end{lemma}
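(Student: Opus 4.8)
The plan is to start from equation \eqref{eq2.14}, which was derived in Lemma \ref{lemma2.1} purely from the quasi-Einstein equation \eqref{eq1.3} together with $dX=0$ (the derivation there was for $m=2$, but the same Bianchi-identity computation goes through for general $m>0$, producing the analogous relation). Writing $u := \divergence_g X - |X|_g^2 - m\lambda$, that identity takes the clean form $\nabla_i u = u\, X_i$, i.e.\ $du = u\, X$. The key structural fact is that $u$ satisfies a first-order linear ODE along the flow of $X$, so its sign cannot change and its zero set is controlled. First I would integrate this relation over the closed manifold $M$: since $X$ is closed, I can try to manufacture an exact or divergence form. The cleanest move is to note that $du = u\,X$ forces the zero set of $u$ to be either empty or all of $M$ by a connectedness/maximum-principle argument—where $u$ vanishes it stays vanishing along integral curves of $X$, and the equation is homogeneous in $u$.

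For part (ii), the plan is to integrate $\divergence_g X$ against the volume form, which gives zero by the divergence theorem, and combine this with the trace relation \eqref{eq2.13} (with $n\lambda$ replaced by the general-$m$ analogue) to extract a sign contradiction unless $X\equiv 0$. More directly, when $\lambda=0$ I would show $u$ cannot be a nonzero constant and that $u\equiv 0$ together with $\lambda = 0$ forces $\int_M |X|_g^2 = \int_M \divergence_g X = 0$, hence $X=0$; then \eqref{eq1.3} reduces to $\ric = 0$. For part (iii) with $\lambda<0$, the dichotomy is exactly the dichotomy ``$u\equiv 0$'' versus ``$u$ nowhere zero.'' If $u\equiv 0$ then \eqref{eq3.4} holds verbatim. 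If instead $u$ is nowhere zero, then $du = u\,X$ gives $X = d(\log|u|)$, exhibiting $X$ as exact. So the entire content of (iii) is the zero-set dichotomy applied to $u$.

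For part (i), $\lambda>0$, I would again invoke the dichotomy: if $u\equiv 0$ then $\divergence_g X = |X|_g^2 + m\lambda$, and integrating the left side to zero over closed $M$ gives $\int_M(|X|_g^2 + m\lambda) = 0$, impossible when $\lambda>0$ unless the integrand is negative somewhere, which it cannot be. Hence $u\equiv 0$ is excluded, forcing $u$ nowhere zero, whence $X = d(\log|u|)$ is exact; the fact that $f := \log|u|$ obeys \eqref{eq3.1} then follows from Lemma \ref{lemma3.1}. I should be careful to check that $u$ is genuinely nowhere zero (so that $\log|u|$ is smooth and globally defined, not just locally), which is where the dichotomy does its real work: $u$ is either identically zero or never zero on the connected manifold $M$.

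The main obstacle I anticipate is rigorously establishing the zero-set dichotomy for $u$ from $du = u\,X$. The clean statement is that on a connected manifold, a function satisfying $du = u\,X$ for a smooth $1$-form $X$ either vanishes identically or is nowhere zero; I would prove this by showing that $\{u = 0\}$ is both open and closed—closedness is automatic from continuity, and openness follows because near a point where $u=0$ the equation $du = u\,X$ bounds $|du|$ by a constant times $|u|$, so a Gr\"onwall-type estimate along paths keeps $u$ at zero in a neighborhood. An alternative, avoiding pointwise ODE arguments, is to observe that $u\,e^{-\int X}$ is locally constant once one passes to the universal cover (where $X$ is exact), which makes the dichotomy transparent but requires a short lifting argument. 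Either route is routine but deserves explicit statement since it is the single mechanism driving all three cases.
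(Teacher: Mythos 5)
Your overall strategy---introduce $u:=\divergence_g X-|X|_g^2-m\lambda$, show it satisfies a homogeneous first-order equation driven by $X$, deduce that $u$ is either identically zero or nowhere zero on the connected closed manifold $M$, and read off the three cases---is essentially the paper's argument in disguise: the paper works with local potentials $f_\alpha$ and constants $\mu_\alpha$ satisfying $u=-m\mu_\alpha e^{2f_\alpha/m}$, and its footnote records exactly your trichotomy (``strictly positive, strictly negative, or identically zero''). Your global formulation $X\propto d\log|u|$ in the nowhere-zero branch is a slightly cleaner way to get exactness than the paper's normalization of the $\mu_\alpha$ to glue the $f_\alpha$. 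One small correction: for general $m$ the Bianchi computation gives $\nabla_i u=\tfrac{2}{m}\,u\,X_i$, not $\nabla_i u=u\,X_i$ (the latter is the $m=2$ case, equation \eqref{eq2.14}); this is harmless since $\tfrac{2}{m}>0$, so the dichotomy and the conclusion $X=\tfrac{m}{2}\,d\log|u|$ survive, but the constant should be tracked. Parts (i) and (iii) are correctly handled: ruling out $u\equiv 0$ for $\lambda>0$ by integrating $\divergence_g X=|X|_g^2+m\lambda$ over $M$ is exactly the paper's step, and (iii) is precisely the dichotomy.

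There is, however, a genuine gap in part (ii). Your plan only disposes of the branch $u\equiv 0$ (where integration indeed gives $\int_M|X|_g^2\,dV=0$, hence $X=0$). But the dichotomy leaves a second branch: $u$ nowhere zero, in which case $X=df$ is exact with $f$ globally defined and nonconstant a priori, and \eqref{eq3.1} holds with $\mu\neq 0$. Your proposed substitute---``show $u$ cannot be a nonzero constant''---does not address this, because in that branch $u=-m\mu e^{2f/m}$ need not be constant at all; it merely has a fixed sign. To close part (ii) you must rule this branch out, and that requires more than integration: integrating \eqref{eq3.1} with $\lambda=0$ gives $\int_M|df|_g^2\,dV=m\mu\int_M e^{2f/m}\,dV$, forcing $\mu>0$ (since $\mu\neq 0$), and then one needs a minimum-principle argument---at a minimum of $f$ the left-hand side of \eqref{eq3.1} is $\Delta_g f\ge 0$ while the right-hand side $-m\mu e^{2f/m}$ is strictly negative---to reach a contradiction. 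This maximum/minimum-principle step is the actual content of the paper's proof of (ii) and is absent from your proposal; without it you have only shown ``$X=0$ or $X$ is exact and nontrivial,'' which is not the statement of (ii).
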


Note that when $X$ is exact and $(M,g)$ is a closed manifold, then \cite{CSW} has enumerated several circumstances for which $(M,g)$ is Einstein, including if $n=2$ (\cite[Theorem 1.2]{CSW}), or if $\lambda=0$ and $m>1$ (\cite[Proposition 3.6.(b)]{CSW}). Part (ii) of the above Lemma improves the latter result to include all $m>0$.

\begin{proof} We generalize an argument made in \cite{CRT} for the $\lambda=0$ case, but which holds for all $\lambda\in{\mathbb R}$. Cover $M$ by contractible domains, so that on each such domain ${\mathcal O}_{\alpha}$ we have $X=df_{\alpha}$. Equation  \eqref{eq3.1} holds for $f_{\alpha}$ in each domain ${\mathcal O}_{\alpha}$, where from equation \eqref{eq3.3} there is a $\mu_{\alpha}\in {\mathbb R}$ such that on ${\mathcal O}_{\alpha}$ we have
\begin{equation}
\label{eq3.5}
\Delta_g f_{\alpha} -|df_{\alpha}|_g^2 -m\lambda =-m\mu_{\alpha} e^{2f_{\alpha}/m}.
\end{equation}
The left-hand side of this expression equals $\divergence_g X - |X|_g^2 - m\lambda$ and is defined independently of $f_{\alpha}$. But on non-empty overlap regions ${\mathcal O}_{\alpha}\cap{\mathcal O}_{\beta}$, $f_{\alpha}$ and $f_{\beta}$ can differ (at most) by a constant $f_{\beta}-f_{\alpha}=C_{\alpha\beta}\in {\mathbb R}$. Thus, from the right-hand side of \eqref{eq3.5}, we have $m\mu_{\alpha} e^{2f_{\alpha}/m}=m\mu_{\beta} e^{2f_{\beta}/m}$ on ${\mathcal O}_{\alpha}\cap{\mathcal O}_{\beta}$, so all the $\mu_{\alpha}$ must have the same sign (either all $>0$, all $<0$, or all $=0$).\footnote
{It follows that $\divergence_g X-|X|_g^2-m\lambda$ is either strictly positive, strictly negative, or identically zero on $(M,g)$. We will see that, for $\lambda<0$, it is zero.}

Assume that the $\mu_{\alpha}$ are not all zero, and so are either all positive or all negative. Then, since $m\neq 0$, we must have $\mu_{\alpha} e^{2f_{\alpha}/m} =\mu_{\beta} e^{2f_{\beta}/m}=\mu_{\beta} e^{2(f_{\alpha}+C_{\alpha\beta})/m}$ on a non-empty overlap region ${\mathcal O}_{\alpha}\cap{\mathcal O}_{\beta}$. But this implies that $\mu_{\alpha}=\mu_{\beta}e^{2C_{\alpha\beta}/m}$. Since the constants $\mu_{\beta}$ are free, we can choose $\mu_{\alpha}=\mu_{\beta}$, and then $C_{\alpha\beta}=0$. Hence $f_{\alpha}=f_{\beta}$ on the overlap. Then we define a smooth function $f$ on ${\mathcal O}_{\alpha}\cup {\mathcal O}_{\beta}$ by $f:=f_{\alpha}$ on ${\mathcal O}_{\alpha}$ and $f:=f_{\beta}$ on ${\mathcal O}_{\beta}$. Iterating, we obtain a globally defined $f$ satisfying \eqref{eq3.1}. Thus, either we obtain a function $f$ such that $X=df$ or the $\mu_{\alpha}$ are all zero. If the $\mu_{\alpha}$ are all zero, then \eqref{eq3.5} yields \eqref{eq3.4}, so \eqref{eq3.4} must hold whenever $X$ is not exact. This holds for all signs of $\lambda$. For $\lambda<0$ it proves Lemma \ref{lemma3.2}.(iii). 

Integrating \eqref{eq3.4} over the closed manifold $M$, we obtain 
\begin{equation}
\label{eq3.6}
-m\lambda = \frac{\int_M|X|_g^2dV}{\vol(M)}.
\end{equation}
Hence \eqref{eq3.4} cannot hold if $\lambda>0$, and if $\lambda=0$ it implies that $X \equiv 0$, so we conclude that $X$ is necessarily exact unless $\lambda<0$. This proves Lemma \ref{lemma3.2}.(i).

If $\lambda=0$, it remains to consider only the case of exact $X$, and then \eqref{eq3.1} applies globally on $M$. But integrating \eqref{eq3.1} on a compact manifold with $\lambda=0$, we obtain $\int_M|df|_g^2 dV = \mu \int_M e^{2f/m} dV$, so $\mu\ge 0$ and if $\mu=0$ then $f=const$, implying that $df\equiv X=0$ and so the quasi-Einstein equation becomes simply $\ric=0$. If, on the other hand, $\mu>0$ then both sides of \eqref{eq3.1} are strictly negative everywhere. This is a contradiction, since $f$ must achieve a minimum on the closed manifold $M$ and the left-hand side of \eqref{eq3.1} cannot be negative at a minimum. This proves Lemma \ref{lemma3.2}.(ii).
\end{proof}

Lemma \ref{lemma3.2}.(i) can also be proved by noting that Myers's theorem holds for $\ric_X^m\ge \lambda>0$ \cite{Limoncu}, and in consequence the fundamental group must be finite, which in turn implies that $b_1(M)=0$. Then \emph{all} closed 1-forms are exact. Then we can again apply Lemma \ref{lemma3.1}.

We can improve on case (iii) of Lemma \ref{lemma3.2}, as we show in the next result.

\begin{lemma}\label{lemma3.3}
Let $(M,g,X)$ solve the quasi-Einstein equation \eqref{eq1.3} with $\lambda<0$, $M$ a closed manifold, and $X$ a closed 1-form $dX=0$. Further assume that equation \eqref{eq3.4} holds. Then $X$ is incompressible (i.e., $\divergence_g X=0$) and has constant norm $|X|_g^2=-m\lambda$. The Euler characteristic of $M$ is zero. The scalar curvature of $(M,g)$ is $R=\left ( n-1\right )\lambda$, and therefore $\dim M :=n \neq 2$.
\end{lemma}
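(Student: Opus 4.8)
The plan is to reduce the entire lemma to a single analytic fact: that $w:=|X|_g^2$ is constant. Granting \eqref{eq3.4}, the divergence $\divergence_g X = |X|_g^2 + m\lambda$ is determined by $w$, so once I know $w\equiv -m\lambda$ (the value forced by the average identity \eqref{eq3.6}) I immediately obtain $\divergence_g X = 0$, i.e.\ incompressibility. The scalar-curvature statement then follows by tracing \eqref{eq1.3}: since $dX=0$ the trace reads $R + \divergence_g X - \tfrac1m|X|_g^2 = n\lambda$, and substituting $\divergence_g X = 0$, $|X|_g^2 = -m\lambda$ gives $R=(n-1)\lambda$. Because $|X|_g^2 = -m\lambda > 0$, the dual vector field is nowhere zero, so Poincar\'e--Hopf forces $\chi(M)=0$. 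Finally, if $n=2$ then $R=(n-1)\lambda=\lambda$ is a negative constant, so $\int_M R\,dV = \lambda\,\vol(M) < 0$, contradicting Gauss--Bonnet $\int_M R\,dV = 4\pi\chi(M) = 0$; hence $n\neq 2$.

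The heart of the argument is therefore the constancy of $w$, which I would extract from a Bochner/maximum-principle computation. Since $X$ is closed, the Weitzenb\"ock formula gives $\Delta_g X_i = R_{ij}X^j + \nabla_i(\divergence_g X)$, where $\Delta_g$ is the rough Laplacian. Feeding this into $\tfrac12\Delta_g w = |\nabla X|_g^2 + X^i\Delta_g X_i$, then using \eqref{eq1.3} to write $R_{ij} = \lambda g_{ij} - \nabla_i X_j + \tfrac1m X_iX_j$ (so that $R_{ij}X^iX^j = \lambda w - \tfrac12 X^i\nabla_i w + \tfrac1m w^2$) and \eqref{eq3.4} to replace $\nabla_i(\divergence_g X)$ by $\nabla_i w$, the first-order pieces organize into a single drift term and I arrive at
\[
\Delta_g w - X^i\nabla_i w = 2|\nabla X|_g^2 + 2\lambda w + \tfrac{2}{m}\,w^2 .
\]

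To close, I would apply the maximum principle to the drift operator $Lw := \Delta_g w - X^i\nabla_i w$. At a point $p$ where $w$ attains its maximum, $\nabla w(p)=0$ and $\Delta_g w(p)\le 0$, so the displayed identity yields $|\nabla X|_g^2(p) + \lambda w(p) + \tfrac1m w(p)^2 \le 0$. By \eqref{eq3.6} the average of $w$ is $-m\lambda>0$, whence $w(p)\ge -m\lambda$ and thus $\lambda + \tfrac1m w(p)\ge 0$; since $w(p)>0$ this forces both $|\nabla X|_g^2(p)=0$ and $w(p)=-m\lambda$. Then $\max_M w$ equals the average of $w$, so $w\equiv -m\lambda$, which is exactly the constancy required, and the corollaries listed above follow.

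I expect the main obstacle to be engineering the Bochner computation so that the maximum principle actually closes. Two things have to work in concert: the first-order contributions coming from $R_{ij}X^iX^j$ and from $\nabla_i(\divergence_g X)$ must combine into the single drift term $X^i\nabla_i w$ (leaving no stray gradient terms, so that $L$ is a genuine drift Laplacian), and the zeroth-order terms $\lambda w + \tfrac1m w^2 = \tfrac1m w(w+m\lambda)$ must be sign-controlled at the maximum using the average value $-m\lambda$ furnished by \eqref{eq3.6}. Getting these algebraic cancellations right---rather than any deep geometry---is where the care lies; once \eqref{eq3.4} and \eqref{eq3.6} are in hand, the remainder is bookkeeping.
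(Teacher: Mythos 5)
Your proposal is correct and follows essentially the same route as the paper: both derive the drift-Laplacian identity $\Delta_g|X|^2-\nabla_X|X|^2-2|\nabla X|^2=\tfrac{2}{m}|X|^2(|X|^2+m\lambda)$ from differentiating \eqref{eq3.4}, contracting with $X$, and substituting $\ric(X,X)$ from \eqref{eq1.3}, and both close via the maximum principle combined with the averaging identity \eqref{eq3.6} (you compare the maximum to the average from below, the paper from above, but the two are interchangeable). The remaining conclusions (incompressibility, $R=(n-1)\lambda$, Poincar\'e--Hopf, and Gauss--Bonnet for $n\neq 2$) are handled identically.
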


\begin{proof}
Note that by the Ricci identity we have $\nabla_i \divergence_g X = \nabla_j\nabla_i X^j -R_{ij}X^j$. Furthermore, $\nabla_j\nabla_i X^j\equiv \nabla^j\nabla_i X_j= \nabla^j\nabla_j X_i \equiv \Delta_g X_i$ since $\nabla_i X_j = \nabla_j X_i$ (i.e., since $dX=0$). Here $\Delta_g X_i$ is the rough (or connection) Laplacian (i.e. $\tr_g \nabla^2$) acting on $X_i$. By differentiating \eqref{eq3.4} and commuting derivatives as we have just done, we obtain
\begin{equation}
\label{eq3.7}
\begin{split}
0=&\, \nabla \left ( \divergence_g X -|X|_g^2-m\lambda\right )\\
=&\, \Delta_g X - \ric(\cdot, X) -\nabla \left ( |X|_g^2 \right ).
\end{split}
\end{equation}
Contracting against $2X$ and using that $2X^i\Delta_g X_i=\Delta_g \left ( |X|_g^2\right )-2\left \vert \nabla X \right \vert_g^2$, we obtain
\begin{equation}
\label{eq3.8}
0=\Delta_g \left ( |X|_g^2 \right ) -2| \nabla X|_g^2 -2\ric(X,X) -2\nabla_X  \left ( |X|_g^2 \right ).
\end{equation}
But from \eqref{eq1.3} we have that
\begin{equation}
\label{eq3.9}
\ric(X,X)=-\frac12 \nabla_X  \left ( |X|_g^2 \right ) +\frac{1}{m}\left ( |X|_g^2 \right )^2 +\lambda |X|_g^2.
\end{equation}
We insert this into \eqref{eq3.8} to obtain
\begin{eqnarray}
\label{eq3.10}
& & 0=\Delta_g \left ( |X|_g^2 \right ) -\nabla_X  \left ( |X|_g^2 \right ) -2|\nabla X|_g^2  -\frac{2}{m} |X|_g^2  \left ( |X|_g^2 +m\lambda \right )\\
\label{eq3.11}
& \implies & \Delta_g \left ( |X|_g^2 \right ) -\nabla_X  \left ( |X|_g^2 \right ) -2|\nabla X|_g^2 =\frac{2}{m} |X|_g^2  \left ( |X|_g^2 +m\lambda \right ).
\end{eqnarray}
Since $|X|_g^2$ must achieve a maximum on the closed manifold $M$ and since every term on the left in \eqref{eq3.11} is nonpositive at a maximum, the right-hand side must also be nonpositive at a maximum of $|X|_g^2$. Therefore $|X|_g^2 \le -m\lambda$ at the maximum of $|X|_g^2$, and hence everywhere. But from \eqref{eq3.6} we see that the average value of $|X|_g^2$ on $M$ is $-m\lambda$. Since the maximum of $|X|_g^2$ equals its average, it must be constant and so $|X|_g^2 \equiv -m\lambda$ on $M$. Then from \eqref{eq3.4} we have $\divergence_g X=0$. Then from the trace of equation \eqref{eq1.3}, we get that $R=\left ( n-1\right )\lambda$. The Euler characteristic must vanish because $|X|$ is never zero. But since $\int_M R dV = (n-1)\lambda \vol(M)<0$, we have a contradiction when $n=2$.
\end{proof}

\medskip

Now we prove the main result.

\begin{proof}[Proof of Theorem \ref{theorem1.2}] 
To prove Theorem \ref{theorem1.2}.(i), choose $\lambda>0$. Then by Lemma \ref{lemma3.2}.(i) we know that $X=df$. Then we may take the trace of equation \eqref{eq1.3} and use $X=df$ to obtain  
\begin{equation}
\label{eq3.12}
R+\Delta_g f -\frac{1}{m} \vert df \vert_g^2 = n \lambda.
\end{equation}
Multiplying this equation by $e^{-f/m}$ and integrating over the closed manifold $M$, we obtain $\int_M (R-n\lambda)e^{-f/m}dV=0$. If $R$ is constant then we must have $R=n\lambda$, so \eqref{eq3.12} becomes $\Delta_g f - \frac{1}{m}|df|_g^2=0$. Then the strong maximum principle implies that $f=const$, so $X=0$ and then the quasi-Einstein equation \eqref{eq1.3} becomes simply $\ric=\lambda g$. 

To prove the assertion concerning the Yamabe type, first let $m\ge 1$. Then we may use that $X$ is exact to invoke \cite[Proposition 3.6.(a)]{CSW}, which states that $R\ge \frac{n(n-1)}{(n+m-1)}\lambda>0$. Then a standard argument, using only that $(M,g)$ is closed and $R>0$, shows that the Yamabe constant of the conformal class of the metric $g$ must be positive and hence so is the Yamabe invariant (the supremum over conformal classes of the Yamabe constant of each class). For $0<m<1$ and $n>2$, we may invoke Proposition \ref{propositionA.1}. Finally, for $n=2$, the Yamabe type is the sign of the Euler characteristic $\chi(M)$. We take the trace of \eqref{eq1.3} and integrate to get $2\pi\chi(M)=\int_M RdV_g = \lambda \vol(M)+\frac{1}{m}\int_M |X|_g dV_g>0$. This proves Theorem \ref{theorem1.2}.(i).

Theorem \ref{theorem1.2}.(ii) follows immediately from Lemma \ref{lemma3.2}.(ii) so there is nothing to do. 

To prove Theorem \ref{theorem1.2}.(iii), assume that $X$ is exact. Then equation \eqref{eq3.1} holds. Then at a maximum, say $f=f_{\rm max}$, we have $\Delta_g f \le 0$. From \eqref{eq3.1} then $-m\lambda\ge -m\mu e^{2f_{\rm max}/m}$. At a minimum, say $f=f_{\rm min}$, we have $\Delta_g f \ge 0$ so $-m\lambda \le -m \mu e^{2f_{\rm min}/m}$. Putting these together we have $-m\mu e^{2f_{\rm max}/m}\le -m\mu e^{2f_{\rm min}/m}$. Now $\mu$ has the same sign as $\lambda$. [Proof: multiply \eqref{eq3.1} by $e^{-f}$ and integrate to get $-m\lambda\int e^{-f}dV = -m\mu\int e^{(2-m)f/m}dV$.] So $m\mu<0$. Then from $-m\mu e^{2f_{\rm max}/m}\le -m\mu e^{2f_{\rm min}/m}$ we get that $e^{2f_{\rm max}/m}\le e^{2f_{\rm min}/m}$, so $f_{\rm max}/m\le f_{\rm min}/m$. If $m>0$ this yields $f_{\rm max}\le f_{\rm min}$, so $f$ is constant and $g$ is Einstein. If $b_1(M)=0$ then $X$ is necessarily exact, so then $g$ is Einstein. But if $X$ is not exact, then by Lemma \ref{lemma3.2}.(iii) we must have that $\divergence_g X -|X|_g^2=m\lambda$ and then by Lemma \ref{lemma3.3} we have $\divergence_g X =0$, $|X|_g^2=-m\lambda$, $\chi(M)=0$, $R=\left ( n-1\right )\lambda$, and $n\neq 2$.
\end{proof}

We note that the argument given for the case of $X$ exact in the last paragraph of the proof is essentially that used in the proof of \cite[Theorem 1.1]{KK}, which used the quasi-Einstein equation with $m$ an integer to study warped products.

We end this section by mentioning some other results which strengthen Theorem \ref{theorem1.2} in low dimensional cases.

When $\lambda >0$ and $M$ is closed, Theorem \ref{theorem1.2}.(i) asserts that the closed 1-form $X$ is exact. In \cite{CRT} it is shown that if one further assumes that $n=m=2$, the 1-form vanishes and $(M,g)$ is Einstein. It was later shown in \cite[Theorem 1.2]{CSW} that if $M$ is exact then the $n=m=2$ condition could be relaxed to $m>0$ with $n=2$. Hence, we can strengthen Theorem \ref{theorem1.2}.(i): For $X$ a closed 1-form and $M$ a closed 2-manifold, any quasi-Einstein metric is Einstein.

If we dispense for the moment with the assumption that the 2-manifold $M$ is closed but fix $\lambda\neq 0$, we can integrate \eqref{eq3.1} with exact $X$. This calculation was carried out in \cite[Appendix A]{HPW} when $m> 1$. Their result can be expressed in a familiar coordinate system as
\begin{equation}
\label{eq3.13}
ds^2 = \frac{d\psi^2}{\left ( 1  -\frac{a^{m-1}}{\psi^{m-1}} -\frac{(m-1)\lambda}{(m+1)\mu}\psi^2\right )} +\left ( 1  -\frac{a^{m-1}}{\psi^{m-1}} -\frac{(m-1)\lambda}{(m+1)\mu}\psi^2\right ) d\tau^2,
\end{equation}
where $a\in {\mathbb R}$ is an arbitrary constant, and for completeness $X=d(-m\log \psi)$. For $m$ an integer, readers will recognize the metric as the quotient of an $(m+2)$-dimensional Schwarzschild-de Sitter metric (with Riemannian signature) by its $\SO(m+1)$ isometry group. It is also clearly a metric on a cylinder that cannot be smoothly compactified to ${\mathbb S}^2$ unless $a=0$, and then it is a round metric. This gives another perspective on \cite[Theorem 1.2]{CSW}. The calculation is also valid for $0<m<1$, but then the $a=0$ case is a hyperbolic metric.

Finally, for $n=3$ and $\lambda>0$, the condition on the Yamabe type permits nontrivial quasi-Einstein metrics with exact $X$ on spherical 3-spaces, ${\mathbb S}^2\times {\mathbb S^1}$, and connected sums thereof (we can restrict to orientable cases since a quasi-Einstein structure on a non-orientable manifold will induce one on the orientable double cover). But by a Bakry-\'Emery version of the Myers theorem, the fundamental group must be finite \cite{Limoncu}. Hence the universal cover of a closed quasi-Einstein 3-manifold with $\lambda>0$ and $dX=0$ is diffeomorphic to ${\mathbb S}^3$ (but need not be round).

\section{Static quasi-Einstein manifolds with matter}
\setcounter{equation}{0}
\label{section4}

\noindent The Einstein equation with matter in $(n+2)$-dimensional spacetime $(\mathbf{M}, \mathbf{g})$ is written as \begin{equation}
\label{eq4.1}
\ric(\mathbf{g})-\frac12 R \mathbf{g}+\Lambda \mathbf{g} = {\mathbf{T}}\ ,
\end{equation}
where $R:=\tr \ric(\mathbf{g})$ is the scalar curvature of spacetime, $\Lambda=\frac{n\lambda}{2}$, and the stress-energy tensor $\mathbf{T}$ is a divergenceless symmetric rank-2 tensor field which characterizes the matter (up a a numerical factor which we have absorbed). Equation \eqref{eq1.4} may be considered to be an inhomogeneous version of the vacuum equations \eqref{eq2.1}.  In the coordinates of \eqref{eq1.1}, the near-horizon limit of $\mathbf{T}$ may be written as \cite[equation 16]{KL}
\begin{equation}
\label{eq4.2}
{\mathbf T} \to 2dv\left [ T_{+-}dr +r\left ( \beta_i +T_{+-}X_i\right ) dx^i +\frac{r^2}{2}\left ( T_{+-}Y -\frac12 \divergence_g \beta + X_i\beta^i \right )dv\right ] +T_{ij}dx^i dx^j,
\end{equation}
where $T_{+-}$, $\beta$, and $T$ are a function, 1-form, and symmetric $(0,2)$-tensor, respectively, on $M$, with $\beta$ expressible in terms of $T_{+-}$ and $T$ through the formula
\begin{equation}
\label{eq4.3}
\beta_i := -\nabla^jT_{ij}+T_{ij}X^j -T_{+-}X_i.
\end{equation} 
The static condition for the ambient spacetime implies $K \wedge \mathbf{T}(K) =0$ where $K = \mathbf{g}(\partial_v,\cdot)$ in the coordinates of \eqref{eq1.1} \cite{KL2}. It follows using \eqref{eq4.2} that $\beta_i=0$. Thus, when matter is included, the static near horizon equations \eqref{eq2.7}--\eqref{eq2.11} are then replaced by
\begin{eqnarray}
\label{eq4.4}
\lambda&=& Y-\frac12 |X|_g^2 + \frac12 \divergence_g X -\frac{(n-2)}{n}T_{+-}+\frac{\tr_gT}{n},\\
\label{eq4.5}
\lambda g&=&\ric_X^2(g)-P:=\ric(g) +\frac12 \pounds_X g -\frac{1}{2}X\otimes X -P,\\
\label{eq4.6}
0&=& \Delta_g Y-3 \nabla_X Y -Y\divergence_g X +2Y|X|_g^2  ,\\
\label{eq4.7}
0&=&\nabla_iY-YX_i, \\
\label{eq4.8}
0&=& dX,\\
\label{eq4.9}
0&=& -\nabla^jT_{ij}+T_{ij}X^j -T_{+-}X_i,\\
\label{eq4.10}
P&:=& T-\frac{1}{n} \left ( \tr_g T +2T_{+-}\right )g.
\end{eqnarray}
Note that \eqref{eq4.10} is merely a definition so we can immediately substitute this into the preceding equations to reduce the system. And similarly to the vacuum case, \eqref{eq4.6} can be obtained by operating with $\nabla^i-2X^i$ on \eqref{eq4.7}. 

We now derive a version of Lemma \ref{lemma2.1} valid for the static near horizon geometry equations with matter.

\begin{lemma}\label{lemma4.1}
Let $(M,g,X)$ be a solution of equations \eqref{eq4.5} and \eqref{eq4.8} for a given stress-energy tensor $T$. If \eqref{eq4.4} defines $Y=Y(g,X,T)$ in terms of this solution, then \eqref{eq4.6} and \eqref{eq4.7} are satisfied as well.
\end{lemma}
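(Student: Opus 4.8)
The plan is to mimic the structure of the vacuum proof in Lemma \ref{lemma2.1}, modifying each step to carry along the matter terms $T$, $T_{+-}$, and $P$. First I would dispose of the redundancy of \eqref{eq4.6}: exactly as in the vacuum case, operating with $\nabla^i - 2X^i$ on \eqref{eq4.7} reproduces \eqref{eq4.6}, so it suffices to establish \eqref{eq4.7}, that is $\nabla_i Y = Y X_i$, from \eqref{eq4.5}, \eqref{eq4.8}, and the defining relation \eqref{eq4.4}. This reduces the work to a single divergence computation.

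The main computation is to take the divergence of the tensor equation \eqref{eq4.5}. Using $dX=0$ to write $(\tfrac12 \pounds_X g)_{ij} = \nabla_i X_j$, the contracted second Bianchi identity, and the Ricci identity, I would reproduce the left side of \eqref{eq2.12} but now with the extra contribution $-\nabla^j P_{ij}$ coming from the matter term. The key structural input is that $\mathbf{T}$ is divergenceless on spacetime, which on $M$ manifests through \eqref{eq4.9}, i.e. $\nabla^j T_{ij} = T_{ij} X^j - T_{+-} X_i$; together with the definition \eqref{eq4.10} of $P$ this lets me express $\nabla^j P_{ij}$ entirely in terms of $T$, $T_{+-}$, $X$, and their first derivatives, with no uncontrolled second derivatives of the matter fields surviving. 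I then substitute the trace of \eqref{eq4.5} (the matter analogue of \eqref{eq2.13}) to eliminate $R$, and use \eqref{eq4.5} itself to replace the $R_{ij} X^j$ term. The goal is that, after cancellations paralleling those leading to \eqref{eq2.14}, everything organizes into the form
\begin{equation}
\nabla_i \left( \divergence_g X - |X|_g^2 - 2\lambda + (\text{matter terms}) \right) = \left( \divergence_g X - |X|_g^2 - 2\lambda + (\text{matter terms}) \right) X_i,
\end{equation}
where the matter terms are precisely those appearing in \eqref{eq4.4}, so that defining $Y$ via \eqref{eq4.4} recasts this identity as $\nabla_i Y = Y X_i$, which is \eqref{eq4.7}.

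The hard part will be bookkeeping the matter contributions so that they assemble into exactly the combination $-\tfrac{(n-2)}{n} T_{+-} + \tfrac{1}{n}\tr_g T$ dictated by \eqref{eq4.4}. Two points require care. First, when computing $\nabla^j P_{ij}$ from \eqref{eq4.10}, the trace term $\tfrac1n(\tr_g T + 2T_{+-})g_{ij}$ contributes a gradient $-\tfrac1n \nabla_i(\tr_g T + 2T_{+-})$, and I must verify that the non-divergence-form pieces of $\nabla^j T_{ij}$ supplied by \eqref{eq4.9} conspire with the $\tfrac12 X_i \divergence_g X$ and $X^j \nabla_j X_i$ terms (as in \eqref{eq2.12}) to leave only the desired expression. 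Second, the matter-modified trace relation replacing \eqref{eq2.13} will contain $T_{+-}$ and $\tr_g T$, and I must confirm that substituting it for $R$ produces the right coefficients; a useful consistency check is that setting $T = 0$, $T_{+-} = 0$ must recover the vacuum identity \eqref{eq2.14} exactly. Provided the contracted divergencelessness \eqref{eq4.9} is the correct on-shell constraint — which it is, being the near-horizon reduction of $\nabla^\mu \mathbf{T}_{\mu\nu}=0$ — the algebra should close, and the final rearrangement into $\nabla_i Y = Y X_i$ is then purely formal.
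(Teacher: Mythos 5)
Your proposal follows the paper's proof essentially verbatim: dispose of \eqref{eq4.6} by applying $\nabla^i-2X^i$ to \eqref{eq4.7}, then take the divergence of \eqref{eq4.5} via the contracted Bianchi and Ricci identities to obtain the matter-corrected analogue of \eqref{eq2.14} (the paper's equation \eqref{eq4.11}), and use \eqref{eq4.9} and \eqref{eq4.10} to recast the right-hand side so that the identity collapses to $\nabla_i Y=YX_i$ with $Y$ defined by \eqref{eq4.4}. The bookkeeping you flag does close as you predict (indeed $\tr_g P=-2T_{+-}$ and \eqref{eq4.9} turns $2\nabla^jT_{ij}-2T_{ij}X^j$ into $-2T_{+-}X_i$, assembling exactly the combination $\frac{(n-2)}{n}T_{+-}-\frac{1}{n}\tr_g T$), and you are right that \eqref{eq4.9} is an essential input, a point the paper's own proof leaves implicit.
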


\begin{proof} Repeat the steps of Section 2, beginning this time from \eqref{eq4.5}. We apply the contracted Bianchi identity and Ricci identity as before. This yields a version of \eqref{eq2.14} valid when matter is present:
\begin{equation}
\label{eq4.11}
\nabla_i \left ( \divergence_g X -|X|_g^2 -2\lambda \right ) -\left ( \divergence_g X -|X|_g^2 -2\lambda \right )X_i = 2\nabla^j P_{ij}-2P_{ij}X^j -\nabla_i \tr_g P.
\end{equation}
Treating \eqref{eq4.4} as the definition of $Y$, and using \eqref{eq4.10}, this reduces to 
\begin{equation}
\label{eq4.12}
-2\left ( \nabla_i Y -YX_i\right ) =0,
\end{equation}
which is \eqref{eq4.7} and then, as stated above, \eqref{eq4.6} follows from application of $\nabla^i-2X^i$ to \eqref{eq4.7}. 
\end{proof} 

We therefore can restrict our attention to equations \eqref{eq4.5} and \eqref{eq4.8} (incorporating as well the definition \eqref{eq4.10}):
\begin{eqnarray}
\label{eq4.13}
\lambda g&=& \ric(g) +\nabla X -\frac{1}{2}X\otimes X -\left [ T-\frac{1}{n} \left (\tr_g T\right )g\right ] +\frac{2}{n}T_{+-}g,\\
\label{eq4.14}
0&=& (dX)_{ij} = \frac12 \left ( \nabla_i X_j -\nabla_j X_i\right ) .
\end{eqnarray}
In the special case of $T_{+-}=const$ and $\tf_g T := T-\frac{1}{n} \left (\tr_g T\right )g=0$, equation \eqref{eq4.13} reduces to
\begin{equation}
\label{eq4.15}
{\tilde \lambda} g = \ric(g) +\nabla X -\frac{1}{2}X\otimes X,
\end{equation}
which has the same form as \eqref{eq2.8} with $m=2$. Then Theorem \ref{theorem1.2} immediately yields the following result.

\begin{theorem}\label{theorem4.2}
Let $(M,g,X)$ be a compact static near horizon geometry (equations \eqref{eq4.4}--\eqref{eq4.10}) with stress-energy tensor given by equation \eqref{eq4.2} such that the tracefree part of $T_{ij}$ vanishes and $T_{+-}$ is constant. Let ${\tilde \lambda} =\lambda-\frac{2}{n}T_{+-}$. Then the conclusions of Theorem \ref{theorem1.2} hold, with $m=2$ and ${\tilde \lambda}$ replacing $\lambda$.
\end{theorem}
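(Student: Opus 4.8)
The plan is to show that, under the two stated hypotheses, the matter-coupled equation \eqref{eq4.13} collapses onto the pure $m=2$ quasi-Einstein equation \eqref{eq4.15}, whereupon Theorem \ref{theorem1.2} applies verbatim with $\tilde\lambda$ in the role of $\lambda$. The argument is thus essentially one of recognition rather than computation, since the reduction has already been carried out in the text preceding the statement.

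First I would begin from \eqref{eq4.13}, which already incorporates the definition \eqref{eq4.10}, so that any compact static near horizon geometry with matter satisfies \eqref{eq4.13} together with $dX=0$ from \eqref{eq4.14}. The hypothesis $\tf_g T = T - \frac{1}{n}(\tr_g T)g = 0$ is precisely the statement that the bracketed term $\left[T-\frac{1}{n}(\tr_g T)g\right]$ in \eqref{eq4.13} vanishes identically, leaving
\[
\lambda g = \ric(g) + \nabla X - \tfrac{1}{2}X\otimes X + \tfrac{2}{n}T_{+-}\,g,
\]
so that the only surviving matter contribution is the pure-trace term $\frac{2}{n}T_{+-}g$. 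Moving this term to the left and setting $\tilde\lambda := \lambda - \frac{2}{n}T_{+-}$ produces exactly \eqref{eq4.15}.

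The one point that genuinely requires the hypotheses — and which I regard as the only real content of the argument — is verifying that $\tilde\lambda$ is a \emph{constant}: Theorem \ref{theorem1.2} is stated and proved only for constant $\lambda$, and here $\lambda$ is constant by the standing conventions, while the assumption that $T_{+-}$ is constant is precisely what guarantees that $\tilde\lambda$ is constant as well. Without this constancy, \eqref{eq4.15} would be a quasi-Einstein equation with a variable coefficient, falling outside the scope of Theorem \ref{theorem1.2}; this is exactly why the hypothesis ``$T_{+-}$ constant'' is imposed rather than being deducible from the field equations.

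Finally, \eqref{eq4.15} is literally the $m=2$ quasi-Einstein equation \eqref{eq1.3}/\eqref{eq2.8} with $\tilde\lambda$ replacing $\lambda$, and $dX=0$ holds by \eqref{eq4.14}; moreover Lemma \ref{lemma4.1} shows that the remaining equations \eqref{eq4.6}--\eqref{eq4.7} for $Y$ are automatically consistent, so no further constraints on $(M,g,X)$ are imposed. Hence the triple $(M,g,X)$ satisfies the hypotheses of Theorem \ref{theorem1.2} with $m=2$ and $\tilde\lambda$, and the conclusions of that theorem transfer directly. I expect the only subtlety to be the constancy of $\tilde\lambda$ just discussed; everything else is immediate from the reduction to \eqref{eq4.15}.
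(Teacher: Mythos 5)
Your proposal is correct and follows essentially the same route as the paper, which likewise observes that $\tf_g T=0$ and $T_{+-}$ constant reduce \eqref{eq4.13} to \eqref{eq4.15}, i.e.\ the $m=2$ quasi-Einstein equation with the constant $\tilde\lambda=\lambda-\frac{2}{n}T_{+-}$, after which Theorem \ref{theorem1.2} applies directly. Your added emphasis on the constancy of $\tilde\lambda$ and the appeal to Lemma \ref{lemma4.1} for the consistency of the $Y$-equations are both consistent with the paper's (very brief) argument.
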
 
It is straightforward to construct solutions of \eqref{eq4.4} by considering the stress energy tensor produced by a Maxwell field (an Abelian gauge theory). Such matter is described by a closed 2-form $\mathbf{F}$ in spacetime $(\mathbf{M}, \mathbf{g})$ with associated stress tensor
\begin{equation}
    \mathbf{T}_{\mu\nu} = 2 \left(\mathbf{F}_{\mu \rho} \mathbf{F}_{\nu}^{~\rho} - \frac{\mathbf{g}_{\mu\nu}}{4} |\mathbf{F}|^2\right)
\end{equation} where $|\mathbf{F}|^2 = \mathbf{F}_{\mu\nu} \mathbf{F}^{\mu\nu}$.  Consider $M = \mathbb{S}^n$ equipped with its canonical round metric with some radius $\ell$, $X=0$, and  $\mathbf{F} = d (c r dv)$ in the coordinates of \eqref{eq1.1} for some constant $c$, It is easy to check that the only non-vanishing part of $\mathbf{T}$ is $T_{+-} = -c^2$. One then easily obtains a solution with an appropriate choice of $\ell$.

An example of a static quasi-Einstein metric with matter with $X \neq 0$ when $n=3$,  which generalizes \eqref{eq1.4}, can be obtained by endowing $M = \mathbb{S}^1 \times \Sigma$ with the metric
\begin{equation}
    g = (1 + k^2) d\Phi^2 + g_\Sigma
\end{equation} where $\Phi$ is identified with period $2\pi$ and $g_\Sigma$ is a metric of constant curvature on some two-dimensional compact manifold $\Sigma$. For concreteness, fix $\lambda =-2$. If we choose $\mathbf{F} = \sqrt{3} k d\text{Vol}_\Sigma$ where $d\text{Vol}_\Sigma$ denotes the volume-form associated to $g_\Sigma$, it is easy to check that $T_{+-} =0$ and $T = 3 k^2 g_\Sigma$. Taking $X = 2(1+k^2) d\Phi$ we arrive at a solution of the quasi-Einstein equations with sources provided $g_\Sigma$ has sectional curvature $-2(1 - 2k^2)$. In particular by choosing $k$ appropriately, we may choose $(\Sigma,g)$ to be either a round 2-sphere,  a flat torus, or a compact hyperbolic space, in contrast to our previous example \eqref{eq1.4}. 

\appendix

\section{Lucietti's argument for the Yamabe type}
\setcounter{equation}{0}

\noindent In \cite{Lucietti}, Lucietti observed that it's easy to find circumstances under which the Yamabe invariant of a near horizon geometry is positive or nonnegative (the sign of this invariant is the \emph{Yamabe type} of the closed mainfold $M$). Here we extend his observation to general $m$. Although we use this in the proof of Theorem \ref{theorem1.2}.(i) for closed 2-form $X$, Lucietti's result and our generalization holds for arbitrary $X$.

Let $n\ge 3$. Taking the trace of \eqref{eq1.3} and multiplying by $\phi^2$, we have
\begin{equation}
\label{eqA.1}
\begin{split}
\frac{4(n-1)}{(n-2)}|\nabla \phi |^2 +R_g\phi^2 =&\, \frac{4(n-1)}{(n-2)}|\nabla \phi |^2 +\left ( -\divergence X +\frac{1}{m}|X|^2 +n\lambda \right )\phi^2\\
=&\, \frac{4(n-1)}{(n-2)}|\nabla \phi |^2-\divergence \left ( \phi^2 X\right ) +2\phi X\cdot\nabla\phi +\frac{1}{m}|X|^2\phi^2 +n\lambda\phi^2\\
=&\, \left ( \frac{4(n-1)}{(n-2)}-k^2\right ) |\nabla \phi |^2-\divergence \left ( \phi^2 X\right ) +\left \vert k\nabla\phi +\frac{1}{k}X\phi\right \vert^2\\ 
&\,  +\frac{\left ( k^2-m\right )}{mk^2} |X|^2\phi^2 +n\lambda\phi^2,
\end{split}
\end{equation}
where $k\neq 0$ is any nonzero constant. Inserting this in \eqref{eq1.7}, we obtain
\begin{equation}
\label{eqA.2}
Q_g(\phi) = \frac{\int_M \left \{ \left ( \frac{4(n-1)}{(n-2)}-k^2\right )|\nabla \phi |^2 + \left \vert k\nabla\phi +\frac{1}{k}X\phi\right \vert^2 +\left (\frac{\left ( k^2-m\right )}{mk^2} |X|^2 +n\lambda\right )\phi^2 \right \}dV_g }{\left ( \int_M \phi^{\frac{2n}{(n-2)}}dV_g\right )^\frac{n-2}{n}}.
\end{equation}

\begin{proposition}\label{propositionA.1} Let $(M,g)$ be a closed orientable $n$-manifold with $n\ge 3$, obeying \eqref{eq1.3} with $\lambda\ge 0$. If $0<m < \frac{4(n-1)}{(n-2)}$, then the Yamabe invariant of $M$ is positive. If as well $n=3$, then $M$ is diffeomorphic either to a spherical space or to $S^2\times S^1$, or to connected sums of the these. If instead $m = \frac{4(n-1)}{(n-2)}$ then Yamabe type zero is possible.
\end{proposition}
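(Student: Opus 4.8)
The plan is to read the sign of the Yamabe invariant directly off the identity \eqref{eqA.2}, exploiting the freedom in the constant $k$. First I would choose $k$ so that every term on the right-hand side of \eqref{eqA.2} is manifestly nonnegative. Since $\lambda\ge 0$ the term $n\lambda\phi^2$ is harmless and the square $\left|k\nabla\phi+\frac1k X\phi\right|^2$ is automatically nonnegative, so the two conditions to impose are that the coefficient $\frac{4(n-1)}{(n-2)}-k^2$ of $|\nabla\phi|^2$ be nonnegative and that the coefficient $\frac{k^2-m}{mk^2}$ of $|X|^2\phi^2$ be nonnegative; as $m,k^2>0$ the latter is equivalent to $k^2\ge m$. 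Both hold simultaneously precisely when $m\le k^2\le \frac{4(n-1)}{(n-2)}$, which is exactly where the hypothesis on $m$ enters.

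For the strict range $0<m<\frac{4(n-1)}{(n-2)}$ I would pick $k^2$ strictly between $m$ and $\frac{4(n-1)}{(n-2)}$, so that the coefficient $c:=\frac{4(n-1)}{(n-2)}-k^2$ of $|\nabla\phi|^2$ is a positive constant and the coefficient of $|X|^2\phi^2$ is also positive. The numerator of \eqref{eqA.2} is then an integral of nonnegative terms, hence $\ge 0$ for all $\phi$; since the sign of the Yamabe constant $Y(M,[g])$ coincides with the sign of the lowest eigenvalue $\mu_1$ of the conformal Laplacian $L_g=-\frac{4(n-1)}{(n-2)}\Delta_g+R_g$, it remains to upgrade $\ge 0$ to $>0$. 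Here I would use that $\mu_1$ is attained by an eigenfunction $\phi_1\not\equiv 0$ on the closed manifold and analyze when the numerator can vanish: the term $c|\nabla\phi_1|^2$ forces $\phi_1$ to be a nonzero constant $c_0$, and the surviving terms then integrate to $c_0^2\int_M\left(\frac1m|X|^2+n\lambda\right)dV_g$, which is strictly positive unless simultaneously $\lambda=0$ and $X\equiv 0$ (the degenerate Ricci-flat case, which is excluded here). Thus $\mu_1>0$, so $Y(M,[g])>0$, and since the Yamabe invariant is the supremum of $Y(M,[\cdot])$ over conformal classes it is positive as well.

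For the refinement when $n=3$ I would invoke the classification recalled in the introduction: positivity of the Yamabe invariant is equivalent to the existence of a metric of positive scalar curvature, and by \cite[Theorem E]{GL} or \cite{SY} the closed orientable $3$-manifolds admitting such a metric are exactly the spherical spaces, $S^2\times S^1$, and connected sums thereof. For the borderline value $m=\frac{4(n-1)}{(n-2)}$ the admissible window $m\le k^2\le \frac{4(n-1)}{(n-2)}$ collapses to the single choice $k^2=m$, at which the coefficients of $|\nabla\phi|^2$ and of $|X|^2\phi^2$ both vanish and \eqref{eqA.2} only delivers nonnegativity of the numerator. To show that strict positivity genuinely fails I would exhibit a solution of \eqref{eq1.3} with $\lambda=0$ and vanishing numerator; the simplest is a Ricci-flat manifold of Yamabe type zero, for instance a flat torus $T^n$ with $X\equiv 0$, where a nonzero constant $\phi$ kills the numerator so that $Y(M,[g])=0$, while $T^n$ admits no metric of positive scalar curvature and hence has Yamabe invariant exactly zero.

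The step I expect to be the main obstacle is precisely the passage from nonnegativity to strict positivity in part (i): completing the square in \eqref{eqA.2} only gives $\mu_1\ge 0$, and promoting this to $\mu_1>0$ requires the spectral input that the infimum defining $\mu_1$ is attained, together with the case analysis that isolates the degenerate Ricci-flat locus as the sole way the numerator can vanish. Once this is in place, the remaining parts are routine: the three-dimensional statement is a direct citation, and the borderline assertion is settled by the explicit flat example above.
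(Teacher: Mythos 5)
Your proposal follows the paper's own route almost exactly: the same identity \eqref{eqA.2}, the same admissible window $m\le k^2\le \frac{4(n-1)}{(n-2)}$ for the auxiliary constant, the same appeal to \cite{GL} and \cite{SY} for the $n=3$ classification, and the same observation that at $m=\frac{4(n-1)}{(n-2)}$ the choice $k^2=m$ yields only nonnegativity. The one point of divergence is how you upgrade $Q_g\ge 0$ to strict positivity: you pass to the first eigenvalue of the conformal Laplacian and run a rigidity argument on a minimizing eigenfunction, whereas the paper keeps the term $n\lambda\phi^2$ and bounds the numerator below by $\min\bigl\{\frac{4(n-1)}{(n-2)}-k^2,\,n\lambda\bigr\}\int_M\bigl(|\nabla\phi|^2+\phi^2\bigr)\,dV_g$ as in \eqref{eqA.3}, then invokes the Sobolev quotient on the closed manifold. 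Both finishes work when $\lambda>0$, which is the only case in which the proposition is actually used (in the proof of Theorem \ref{theorem1.2}.(i)).

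The genuine problem is your parenthetical claim that the degenerate case $\lambda=0$, $X\equiv 0$ is ``excluded here.'' It is not: the hypotheses allow $\lambda=0$, and your own flat torus $(T^n,g_{\rm flat})$ with $X\equiv 0$ --- the very example you deploy for the borderline value of $m$ --- satisfies \eqref{eq1.3} with $\lambda=0$ for \emph{every} $m>0$, yet has Yamabe invariant zero. So your rigidity analysis, correctly carried out, shows that the first assertion fails as literally stated when $\lambda=0$; the case cannot be waved away, and your write-up is internally inconsistent in using that example for one clause while dismissing it in another. To be fair, the paper's own proof degenerates at exactly the same point (the minimum in \eqref{eqA.3} vanishes when $\lambda=0$), so this is really a defect of the statement: it should either assume $\lambda>0$ for the positivity claim or weaken the conclusion to nonnegative Yamabe type when $\lambda=0$. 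If you state that caveat explicitly, your argument is complete; as written, the sentence asserting exclusion is false.
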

\begin{proof} 
If $\lambda\ge 0$ and $0<m\le k^2<\frac{4(n-1)}{(n-2)}$, then \eqref{eqA.2} yields
\begin{equation}
\label{eqA.3}
Q_g(\phi)\ge  \min\left\{ \frac{4(n-1)}{(n-2)}-k^2, n \lambda \right\} \frac{\int_M |\nabla \phi |^2  + \phi^2 \, dV_g }{\left ( \int_M \phi^{\frac{2n}{(n-2)}}\, dV_g\right )^\frac{n-2}{n}}.
\end{equation}
abe constant of $(M,g)$ obeys $Y(M,[g]):=\inf_{\phi\in C^{\infty}(M)\backslash \{ 0 \}} Q_g(\phi)>0$, In turn, the Yamabe invariant of $M$ is the supremum of $Y(M,[g])$ over all conformal classes $[g]$, so it must be positive as well. This implies the stated topological restrictions when $n=3$ and proves the theorem when $0<m<\frac{4(n-1)}{(n-2)}$. 

If $\lambda\ge 0$ and $m=\frac{4(n-1)}{(n-2)}$, choose $k^2=m=\frac{4(n-1)}{(n-2)}$ in \eqref{eqA.2}. Then \eqref{eqA.2} is manifestly nonnegative, proving the $m=\frac{4(n-1)}{(n-2)}$ case .
\end{proof}

\end{document}